\newtheorem{theorem}{Theorem}
\newtheorem{lemma}{Lemma}
\dedicatory{}
\begin{document}
	\initclock
	%Date: \today
	\title[]{Convergence of multiple Dirichlet $L$-series}
	\author[Kohji Matsumoto]{Kohji Matsumoto}
	\address[Kohji Matsumoto]{Graduate School of Mathematics, Nagoya University, Furocho, Chikusa-ku, Nagoya 464-8602, Japan and Center for General Education, Aichi Institute of Technology, 1247 Yachigusa,
	Yakusa-cho, Toyota 470-0392, Japan}
	\email{ kohjimat@math.nagoya-u.ac.jp}
	\author[Dilip K. Sahoo]{Dilip K. Sahoo}
	\address[Dilip K. Sahoo]{Harish-Chandra Research Institute, Chhatnag Road, Jhunsi, 
Prayagraj(Allahabad) 211019, India}
	\email{mzfdilipmath@gmail.com}	
	\subjclass[2020]{11M32.}
	\keywords{Multiple Dirichlet L-series}
	\maketitle
	\begin{abstract}
		In this paper we study the convergence of multiple
		Dirichlet $L$-series defined by
			\begin{align*}
		\sum_{n_1=1}^{\infty}\sum_{n_2=1}^{\infty}\cdots	\sum_{n_r=1}^{\infty}\frac{\chi_1(n_1)\chi_2(n_2)\cdots\chi_r(n_r)}{n_1^{s_1}(n_1+n_2)^{s_2}\cdots (n_1+n_2+\cdots+n_r)^{s_r}}\,,
		\end{align*}where $ \chi_1,\chi_2,\ldots,\chi_r $ are non-principal Dirichlet characters.    We further consider a more general series involvong arithmetic functions
on the numerators. 
In particular we give an integral representation of the above series in the region of convergence by using Abel's summation formula. 
	\end{abstract}
%%%%%%%%%%%%%%%%%%%%%%%%%%%%%%%%%%%%%%%%%%%%%%%%%%%%%%%%%%%%%%%%%%%%%%%%%%%%%%%%%%%
\section{introduction}
%%%%%%%%%%%%%%%%%%%%%%%%%%%%%%%%%%%%%%%%%%%%%%%%%%%%%%%%%%%%%%%%%%%%%%%%%%%%%%%%%%%

	Multiple Dirichlet $L$-series is defined by
\begin{equation}
	\label{MDL-series}
	L(s_1,\ldots,s_r;\chi_1,\ldots,\chi_r)=\sum_{n_1=1}^{\infty}\sum_{n_2=1}^{\infty}\cdots	\sum_{n_r=1}^{\infty}\frac{\chi_1(n_1)\chi_2(n_2)\cdots\chi_r(n_r)}{n_1^{s_1}(n_1+n_2)^{s_2}\cdots (n_1+n_2+\cdots+n_r)^{s_r}}\,,
\end{equation}
where $r$ is a positive integer, $ \chi_1,\chi_2,\ldots,\chi_r $ are %non-principal 
Dirichlet characters of modulo $ q_1,q_2,\ldots,q_r $ respectively and $ s_i\,(1\leq i\leq r) $ are complex variables. Throughout the article, we denote
$ \Re(s_i)=\sigma_i $. 
When all the characters $\chi_i$ are trivial (i.e., $\chi_i(n)=1$ for all $n$), then
\eqref{MDL-series} is the Euler-Zagier multiple series, 
which is known to be absolutely
convergent in the region 
$$ \mathcal{D}_0=
\{(s_1,s_2,\ldots,s_r)\in\mathbb{C}^r:\sigma_r+\sigma_{r-1}+\cdots+\sigma_{r-i}>i+1\,\,
{\rm for}\,\,0\leq i\leq r-1\} 
$$ 
(see \cite{Mat-Illinois}).
It is clear that in the same region, the series \eqref{MDL-series} for any 
$ \chi_1,\chi_2,\ldots,\chi_r $ is absolutely convergent.

Hereafter we assume that $ \chi_1,\chi_2,\ldots,\chi_r $ are non-principal.
For $ r=1 $, \eqref{MDL-series} is nothing but the Dirichlet $L$-series $ L(s,\chi) $. 
Therefore multiple Dirichlet $L$-series is  the multi-variable generalization of 
Dirichlet $L$-series. Matsumoto and Tanigawa \cite{Matsumoto} have studied the meromorphic continuation of multiple Dirichlet series and especially from \cite{Matsumoto}, it follows that $ 	L(s_1,\ldots,s_r;\chi_1,\ldots,\chi_r) $ is an entire function on $ \mathbb{C}^r $. The analytic properties of simillar type of multiple $L$-function defined by the series
\begin{align*}
\sum_{1\leq n_1<n_2<\cdots<n_r}\frac{\chi_1(n_1)\chi_2(n_2)\cdots\chi_r(n_r)}{n_1^{s_1}n_2^{s_2}\cdots n_r^{s_r}}	
\end{align*} is studied by Akiyama and Ishikawa \cite{Akiyama}.
\par
\vspace{2mm}
	It is well known that for non-principal character $ \chi $ modulo $ q $ , $ L(s,\chi)=\sum_{n=1}^{\infty}\frac{\chi(n)}{n^{s}} $ converges for $ \Re(s)>0 $. Because by applying the Abel summation formula, we get that
\begin{align}
	\label{exp2}
	\sum_{n\leq x}\frac{\chi(n)}{n^s}=\left(\sum_{n\leq x}\chi(n)\right)\frac{1}{x^s}+s\int_{1}^x\left(\sum_{n\leq t}\chi(n)\right)\frac{1}{t^{s+1}}dt\,.	
\end{align}Here $ \displaystyle\lim_{x\rightarrow\infty}\left(\sum_{n\leq x}\chi(n)\right)\frac{1}{x^s}=0$ for  $ \Re(s)>0 $ because $ \left|\displaystyle\sum_{n\leq x}\chi(n)\right|\leq q $, and also 	$ \displaystyle\int_{1}^\infty\left(\sum_{n\leq t}\chi(n)\right)\frac{1}{t^{s+1}}dt $ exists for  $ \Re(s)>0 $. Therefore \eqref{exp2} implies that
\begin{align}
	\label{exp3}
	\sum_{n=1}^{\infty}\frac{\chi(n)}{n^{s}}=s\int_{1}^\infty\left(\sum_{n\leq t}\chi(n)\right)\frac{1}{t^{s+1}}dt\,\,\text{for}\,\,\Re(s)>0.	
\end{align}
More generally, for any non-negative integer $n_0$, we obtain
\begin{align}
	\label{exp3-2}
	\sum_{n=1}^{\infty}\frac{\chi(n)}{(n_0+n)^{s}}=s\int_{1}^\infty\left(\sum_{n\leq t}\chi(n)\right)\frac{1}{(n_0+t)^{s+1}}dt\,\,\text{for}\,\,\Re(s)>0.	
\end{align}

 Our first aim of this article is to find the region of (not necessarily absolute, but conditional) convergence of the multiple Dirichlet $L$-series \eqref{MDL-series}. 
Here we understand that the order of summation in \eqref{MDL-series} is, as indicated, 
first take the sum with respect to $n_r$, then to $n_{r-1}$, $\ldots$ and finally to
$n_1$.
\begin{theorem}\label{conv_result}
The multiple series \eqref{MDL-series} is (conditionally) convergent in the region
$$\mathcal{D}=\{\{(s_1,s_2,\ldots,s_r)\in\mathbb{C}^r:\sigma_r+\sigma_{r-1}+\cdots+\sigma_{r-i}>0\,\,{\rm for}\,\,0\leq i\leq r-1\}.
$$
\end{theorem}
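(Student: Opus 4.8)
The plan is to argue by a descending induction that peels off one summation variable at a time, converting each inner sum to an integral with Abel's summation formula exactly as in the derivation of \eqref{exp3-2}, and carrying along enough decay to feed into the next summation. Write $A_k(t):=\sum_{n\le t}\chi_k(n)$, so that $|A_k(t)|\le q_k$ by non-principality, and set $\tau_k:=\sigma_k+\sigma_{k+1}+\cdots+\sigma_r$; the defining inequality of $\mathcal{D}$ with $i=r-k$ says precisely that $\tau_k>0$ for all $1\le k\le r$. Since the innermost block $\sum_{n_r}$ produces a function of $n_1+\cdots+n_{r-1}$ alone, the result of summing $n_k,n_{k+1},\dots,n_r$ has the shape
$$\frac{\chi_1(n_1)\cdots\chi_{k-1}(n_{k-1})}{n_1^{s_1}\cdots(n_1+\cdots+n_{k-1})^{s_{k-1}}}\,\Phi_k(n_1+\cdots+n_{k-1}),$$
where $\Phi_{r+1}\equiv 1$ and, for $1\le k\le r$,
$$\Phi_k(x)=\sum_{n=1}^{\infty}\frac{\chi_k(n)}{(x+n)^{s_k}}\,\Phi_{k+1}(x+n).$$
Thus \eqref{MDL-series} equals $\Phi_1(0)=\sum_{n_1}\chi_1(n_1)n_1^{-s_1}\Phi_2(n_1)$, and it is enough to make sense of each $\Phi_k$ and to bound it.

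The inductive claim, for $k=r,r-1,\dots,2$, would be: the series defining $\Phi_k$ converges, $\Phi_k$ is smooth on $[1,\infty)$, and for each $j\ge 0$ there is a constant $C_{k,j}$ with $|\Phi_k^{(j)}(x)|\le C_{k,j}\,x^{-\tau_k-j}$ for $x\ge 1$. The base case $k=r$ is exactly \eqref{exp3-2} with the integer $n_0$ replaced by a real $x\ge 0$ (the Abel-summation derivation is unchanged), giving $\Phi_r(x)=s_r\int_1^{\infty}A_r(t)(x+t)^{-s_r-1}\,dt$; since $\tau_r=\sigma_r>0$, the bound on $|\sum_{n\le t}\chi_r(n)|$ and differentiation under the integral sign yield the asserted estimates. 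For the inductive step, put $H_k(v):=v^{-s_k}\Phi_{k+1}(v)$ and apply Abel's summation formula to $\sum_n\chi_k(n)H_k(x+n)$: the boundary term $A_k(N)H_k(x+N)$ tends to $0$ because $|A_k|\le q_k$ and $H_k(v)=O(v^{-\tau_k})$ with $\tau_k>0$, and the remaining integral $-\int_1^{\infty}A_k(t)H_k'(x+t)\,dt$ converges absolutely (again because $\tau_k>0$), so $\Phi_k(x)$ is well defined and equals that integral. Differentiating under the integral sign, bounding $H_k^{(j+1)}$ by the Leibniz rule together with the inductive bounds on $\Phi_{k+1}$ and the elementary estimate $|(v^{-s_k})^{(l)}|\le\big(\prod_{0\le i<l}(|s_k|+i)\big)v^{-\sigma_k-l}$, and integrating $(x+t)^{-\tau_k-j-1}$ over $t\ge 1$, one obtains $|\Phi_k^{(j)}(x)|\le C_{k,j}x^{-\tau_k-j}$, completing the step.

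For the outermost summation one runs the same argument once more: with $\Phi_2$ in hand, Abel's formula applied to $\sum_{n_1}\chi_1(n_1)n_1^{-s_1}\Phi_2(n_1)$ has vanishing boundary term, since $n_1^{-\sigma_1}|\Phi_2(n_1)|=O(n_1^{-\tau_1})$ and $\tau_1>0$, and its integral remainder converges absolutely, again because $\tau_1>0$; hence \eqref{MDL-series} converges throughout $\mathcal{D}$. The only genuinely delicate points are bookkeeping — checking that the iterated sum really collapses, in the prescribed order, to the one-variable functions $\Phi_k$, and that differentiation under the integral sign and the interchange of the limit $N\to\infty$ with integration are legitimate — and these follow routinely from the uniform decay estimates. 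The substantive content, and the only place where input beyond the bound $|A_k|\le q_k$ is used, is that the decay exponent of $\Phi_k$ at each stage is the partial sum $\tau_k$, which is positive precisely on $\mathcal{D}$.
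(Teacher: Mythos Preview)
Your argument is correct and is in the same spirit as the paper's: both use induction together with Abel summation applied one variable at a time, and both hinge on the fact that after summing over $n_k,\dots,n_r$ the result decays like $(n_1+\cdots+n_{k-1})^{-\tau_k}$ with $\tau_k>0$ on $\mathcal{D}$. The difference is in what is carried through the induction. The paper proves Theorem~\ref{conv_result} jointly with Theorem~\ref{main result}: its inductive hypothesis is the full explicit $(r-1)$-fold integral representation (with the combinatorial coefficients), and the passage from $r-1$ to $r$ requires the identity of Lemma~\ref{lemma1} to reorganize the sum of Pochhammer factors that arises after differentiating in $t_1$. Convergence then follows by estimating the explicit integrals $U_{k_2,\dots,k_r}(x)$ and $V_{m_1,\dots,m_r}(x)$ termwise.

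You instead carry only the qualitative bound $|\Phi_k^{(j)}(x)|\le C_{k,j}x^{-\tau_k-j}$, which is exactly enough to make the next Abel step work and no more. This is more economical for Theorem~\ref{conv_result} alone: you bypass the combinatorial lemma entirely, and the induction step is a two-line Leibniz computation. The price is that you do not obtain the integral formula of Theorem~\ref{main result}. So your route is a genuine simplification if one only wants convergence, while the paper's route is what is needed if one also wants the closed-form integral expression. One small point worth stating explicitly in a write-up: the inductive bounds on $\Phi_k$ are established for real $x\ge 1$, and the final step uses $\Phi_2$ on $[1,\infty)$ (since $n_1\ge 1$), so the domain matches; the evaluation ``$\Phi_1(0)$'' is just notation for the final Abel summation and does not require the estimate at $x=0$.
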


The numerator of \eqref{MDL-series} consists of Dirichlet characters, but it is possible to
extend the result to the case of more general numerators.

Let $ a_j(m)$ $(1\leq j\leq r)$ be arithmetic functions, and define
%Using the same method, one can generalize the result in Theorems \ref{conv_result} and \ref{main result} for some general multiple Dirichlet series. %which is stated in the following theorem.
$$
\Phi(s_1,\ldots,s_r;a_1,\ldots,a_r)=
\sum_{n_1=1}^{\infty}\sum_{n_2=1}^{\infty}\cdots	\sum_{n_r=1}^{\infty}\frac{a_1(n_1)a_2(n_2)\cdots a_r(n_r)}{n_1^{s_1}(n_1+n_2)^{s_2}\cdots (n_1+n_2+\cdots+n_r)^{s_r}}.
$$
This form of general multiple series was already introduced in \cite{Matsumoto}, and
meromorphic continuation etc. were discussed.
The second-named author \cite{Sahoo} studied the domain of absolute convergence of
$\Phi(s_1,\ldots,s_r;a_1,\ldots,a_r)$.

Here we state the following generalization of Theorem \ref{conv_result} for this multiple
series $\Phi(s_1,\ldots,s_r;a_1,\ldots,a_r)$.
%We conclude the present paper with the statement of the following theorem.   
The proof is
quite the same as that of Theorem \ref{conv_result}, so in the present paper we only
describe the proof of Theorem \ref{conv_result}.

\begin{theorem} \label{generalization}
      Assume that for each $j$,
there exists a positive real number $ \alpha_j $ for which the condition 
$ \left|\sum_{m\leq t} a_j(m)\right|\leq\alpha_j $ 
holds for every $ t\geq 1 $. Then $\Phi(s_1,\ldots,s_r;a_1,\ldots,a_r)$ is
(conditionally) convergent in the region $\mathcal{D}$.    
%
%Moreover in the same region, the integral expression
%	\begin{align*}
%		&\Phi(s_1,\ldots,s_r;a_1,\ldots,a_r)\\
%		&=\int_{1}^\infty\int_{1}^\infty\cdots\int_{1}^\infty\prod_{i=1}^r\left(\sum_{n_i\leq t_i}a_i(n_i)\right)\\
%		&\times\left(\sum_{\substack{k_1+k_2+\cdots+k_r=r\\k_1+\cdots+k_i\leq i\,,\,1\leq i\leq r\\0\leq k_i\leq i\,,\,1\leq i\leq r}}{1\choose k_1}\prod_{i=2}^r{i-k_1-\cdots-k_{i-1}\choose k_i}\prod_{i=1}^r\frac{(s_i)_{k_i}}{\left(\sum_{j=1}^i t_j\right)^{s_i+k_i}}\right)dt_rdt_{r-1}\cdots dt_1\,
%	\end{align*}
%holds.
	\end{theorem}
 
Now we mention some relevant history and the motivation of the present paper.

An important different case, where numerators are twised by certain exponential factors, 
is that of multiple polylogarithms. 
In the one-variable case, such series were already studied by several papers of
T. Shintani and of P. Cassou-Nogu{\`e}s around 1980.
The multi-variable situation was treated later by Imai \cite{Ima81}
and Hida \cite{Hid93} in connection with algebraic number theory, 
by Goncharov \cite{Gon95} \cite{Gon-prep} \cite{Gon01} from the viewpoint of arithmetic
geometry, and from more analytic aspects by de Crisenoy \cite{deC06}, 
de Crisenoy and Essouabri \cite{deCEss08}, and by Essouabri and Matsumoto \cite{EssMat19}.
The multi-variable series with more general form of numerators was introduced by
Lichtin (see \cite{Lic93} \cite{Lic94}), and he applied his results to a certain
lattice point problem.   Ishikawa \cite{Ish02} applied the result in \cite{Akiyama} 
to the evaluation of
multiple character sums.    More generally, the application of the theory of multiple series
to the study of asymptotic properties of arithmetical functions has been considered by a lot
of articles, such as
de la Bret{\`e}che \cite{dlB01}, Bhowmik et al. \cite{BEL07}, Essouabri \cite{Ess12},
\cite{Ess20}, and Essouabri et al. \cite{ESZT22}.

In order to develop such investigations with various applications, it is essentially important 
to understand the complex-analytic behavior of relevant multiple series, such as the region of 
convergence, meromorphic continuation and the location of singularities.   We believe that
the result in the present article is the first step in this direction. 

Here we explain the main idea of the present article.
The strategy of the proof of Theorem \ref{conv_result}
is to express a slightly generalized multiple series 
in terms of an integral which is the multi-variable generalization of \eqref{exp3-2}. We state this result in the following theorem.   
\begin{theorem}\label{main result}
%For $ \sigma_r+\sigma_{r-1}+\cdots+\sigma_{r-i}>0 $ , $ 0\leq i\leq r-1 $, 
In the region $\mathcal{D}$, we have 	
 	\begin{align*}
 		&\sum_{n_1=1}^{\infty}\sum_{n_2=1}^{\infty}\cdots	\sum_{n_r=1}^{\infty}\frac{\chi_1(n_1)\chi_2(n_2)\cdots\chi_r(n_r)}{(n_0+n_1)^{s_1}(n_0+n_1+n_2)^{s_2}\cdots (n_0+n_1+n_2+\cdots+n_r)^{s_r}}\\
 		&=\int_{1}^\infty\int_{1}^\infty\cdots\int_{1}^\infty\prod_{i=1}^r\left(\sum_{n_i\leq t_i}\chi_i(n_i)\right)\\
 		&\times\left(\sum_{\substack{k_1+k_2+\cdots+k_r=r\\k_1+\cdots+k_i\leq i\,,\,1\leq i\leq r\\0\leq k_i\leq i\,,\,1\leq i\leq r}}{1\choose k_1}\prod_{i=2}^r{i-k_1-\cdots-k_{i-1}\choose k_i}\prod_{i=1}^r\frac{(s_i)_{k_i}}{\left(n_0+\sum_{j=1}^i t_j\right)^{s_i+k_i}}\right)dt_rdt_{r-1}\cdots dt_1\,.
 	\end{align*}
 \end{theorem}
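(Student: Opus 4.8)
The plan is to prove Theorem~\ref{main result} in two essentially independent steps. Write $f_i=f_i(u_1,\dots,u_r)=(n_0+u_1+\dots+u_i)^{-s_i}$ and $P=f_1\cdots f_r$, so that the summand on the left-hand side is $\chi_1(n_1)\cdots\chi_r(n_r)\,P(n_1,\dots,n_r)$; put $A_i(t)=\sum_{n\le t}\chi_i(n)$ and denote the whole series by $S$. The first step is to rewrite $S$, by iterated Abel summation, as a single $r$-fold integral against the mixed partial derivative $\partial_{u_1}\cdots\partial_{u_r}P$; the second is to evaluate that derivative by the Leibniz rule together with a short counting argument, which produces exactly the combinatorial sum in the statement.

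For the first step I would sum first over $n_r$. Since $s_r\,(n_0+n_1+\dots+n_{r-1}+t_r)^{-s_r-1}=-\partial_{u_r}f_r$, formula \eqref{exp3-2} (applied with $n_0$ there replaced by $n_0+n_1+\dots+n_{r-1}$ and $s=s_r$) gives
\[
\sum_{n_r=1}^{\infty}\chi_r(n_r)\,P(n_1,\dots,n_r)=-\int_{1}^{\infty}A_r(t_r)\,(\partial_{u_r}P)(n_1,\dots,n_{r-1},t_r)\,dt_r\,.
\]
Regarded as a function of the real variable $u_{r-1}=n_{r-1}$, the right-hand side is $C^1$, tends to $0$ as $n_{r-1}\to\infty$, and Abel's summation formula applied to $\sum_{n_{r-1}}\chi_{r-1}(n_{r-1})(\cdots)$ produces, after differentiating under the integral sign and invoking Fubini, a two-fold integral against $\partial_{u_{r-1}}\partial_{u_r}P$. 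Iterating and proving by downward induction on $j$ that performing the sums over $n_r,\dots,n_{j+1}$ replaces $\sum_{n_r}\chi_r(n_r)\cdots\sum_{n_{j+1}}\chi_{j+1}(n_{j+1})$ in $S$ by
\[
(-1)^{r-j}\int_{1}^{\infty}\!\!\cdots\!\int_{1}^{\infty}\Bigl(\prod_{i=j+1}^{r}A_i(t_i)\Bigr)(\partial_{u_{j+1}}\cdots\partial_{u_r}P)(n_1,\dots,n_j,t_{j+1},\dots,t_r)\,dt_r\cdots dt_{j+1}\,,
\]
the case $j=0$ yields
\[
S=(-1)^{r}\int_{1}^{\infty}\!\!\cdots\!\int_{1}^{\infty}\Bigl(\prod_{i=1}^{r}A_i(t_i)\Bigr)(\partial_{u_1}\cdots\partial_{u_r}P)(t_1,\dots,t_r)\,dt_r\cdots dt_1\,.
\]
The main work of this step — and, I expect, the chief obstacle — is the analytic bookkeeping: at each of the $r$ stages one has to verify that the Abel boundary term vanishes, that one may differentiate under the integral sign, and that Fubini applies. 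Since $\partial_{u_{j+1}}\cdots\partial_{u_r}P$ is a finite sum of terms $\prod_i(n_0+u_1+\dots+u_i)^{-s_i-k_i}$ with $k_i\ge0$, $k_i=0$ for $i\le j$ and $\sum_i k_i=r-j$, each of these reductions comes down to convergence of iterated one-dimensional integrals $\int_1^{\infty}(\mathrm{const}+t)^{-\alpha}\,dt$; keeping track of the exponents $\alpha$, the inequalities one needs turn out to be precisely $\sigma_r+\sigma_{r-1}+\dots+\sigma_{r-i}>0$ for $0\le i\le r-1$, i.e., that $(s_1,\dots,s_r)\in\mathcal D$ (which is also what makes $\mathcal D$ the natural domain here).

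For the second step, the Leibniz rule for a product of $r$ functions acted on by the $r$ distinct first-order operators $\partial_{u_1},\dots,\partial_{u_r}$ reads
\[
\partial_{u_1}\cdots\partial_{u_r}\prod_{i=1}^{r}f_i=\sum_{\phi}\ \prod_{i=1}^{r}\Bigl(\prod_{j:\,\phi(j)=i}\partial_{u_j}\Bigr)f_i\,,
\]
the outer sum running over all maps $\phi\colon\{1,\dots,r\}\to\{1,\dots,r\}$. Because $\partial_{u_j}f_i=0$ whenever $j>i$, only the maps with $\phi(j)\ge j$ for every $j$ survive; for such a $\phi$, putting $k_i=\#\phi^{-1}(i)$ and using that $f_i$ depends on $u_1,\dots,u_i$ only through their sum, the factor attached to $f_i$ equals $(-1)^{k_i}(s_i)_{k_i}\,(n_0+u_1+\dots+u_i)^{-s_i-k_i}$. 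Grouping the maps by the tuple $(k_1,\dots,k_r)$ and choosing the preimages $\phi^{-1}(1),\phi^{-1}(2),\dots$ in succession, $\phi^{-1}(i)$ must be a $k_i$-element subset of $\{1,\dots,i\}\setminus(\phi^{-1}(1)\cup\dots\cup\phi^{-1}(i-1))$, a set of $i-(k_1+\dots+k_{i-1})$ elements, so the number of contributing maps with given $(k_1,\dots,k_r)$ is $\binom{1}{k_1}\prod_{i=2}^{r}\binom{i-k_1-\dots-k_{i-1}}{k_i}$; this vanishes unless $k_i\ge0$ and $k_1+\dots+k_i\le i$ for all $i$, and, the preimages forming a partition of $\{1,\dots,r\}$, one also has $k_1+\dots+k_r=r$ — exactly the index set under the sum in the statement. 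Substituting this evaluation of $\partial_{u_1}\cdots\partial_{u_r}P$ into the formula for $S$ above, the sign $(-1)^{r}=(-1)^{\sum_i k_i}$ cancels $\prod_i(-1)^{k_i}$, and the asserted identity follows.
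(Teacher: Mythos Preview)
Your argument is correct, and it takes a genuinely different route from the paper's. The paper proceeds by induction on $r$: it applies the $(r-1)$-variable result to the inner sums over $n_2,\dots,n_r$ (with $n_0$ replaced by $n_0+n_1$), then performs a single Abel summation in $n_1$, and identifies the resulting derivative with the desired combinatorial sum via a separately proved identity (Lemma~\ref{lemma1}), whose proof is a somewhat intricate sequence of applications of the Pascal relation $\binom{a}{b}=\binom{a-1}{b-1}+\binom{a-1}{b}$.

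Your approach replaces that inductive-combinatorial package by the single observation that iterated Abel summation produces the integral of $\prod_i A_i(t_i)$ against $(-1)^r\partial_{u_1}\cdots\partial_{u_r}P$, and then reads off the combinatorial sum directly from the multilinear Leibniz expansion, the constraint $\phi(j)\ge j$ coming from $\partial_{u_j}f_i=0$ for $j>i$. This is cleaner: it \emph{explains} the shape of the sum (the binomial product counts maps $\phi:\{1,\dots,r\}\to\{1,\dots,r\}$ with $\phi(j)\ge j$ and fibre sizes $(k_1,\dots,k_r)$), and it eliminates Lemma~\ref{lemma1} entirely. The analytic bookkeeping you describe---boundary terms, differentiation under the integral, Fubini---is essentially the same work as the paper's $U$- and $V$-estimates: the point in both cases is that each term of $\partial_{u_{j+1}}\cdots\partial_{u_r}P$ has exponent pattern $(k_i)$ with $k_i=0$ for $i\le j$ and $k_r+\cdots+k_{r-l}\ge l+1$ (since $\phi$ maps $\{r-l,\dots,r\}$ into itself), which combined with $\sigma_r+\cdots+\sigma_{r-l}>0$ makes every successive $\int_1^\infty(\cdots+t)^{-\alpha}\,dt$ converge. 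What the paper's presentation buys is that the estimates are written out explicitly term by term, whereas your version compresses them into a uniform statement; either is fine.
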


The main tool to prove Theorem \eqref{main result} is the Abel summation formula. 
Therefore our method is of elementary nature, but technically it is not so simple;
the key technical point is embodied in Lemma \ref{lemma1}, stated in Section \ref{sec3}.
%By the same method, we can generalize the above theorems to the case of more general
%multiple series, which will be briefly mentioned in the last section.

It is to be stressed that, when $n_0=0$, Theorem \ref{main result} gives an integral expression of
\eqref{MDL-series} in the region $\mathcal{D}$.   For instance, in the case $r=2$, 
the above theorem implies 
\begin{align}\label{case2}
		&\sum_{n_1=1}^{\infty}\sum_{n_2=1}^{\infty}\frac{\chi_1(n_1)\chi_2(n_2)}{n_1^{s_1}(n_1+n_2)^{s_2}}
		\\
		&=\int_{1}^{\infty}\int_{1}^{\infty}\left(\sum_{n_1\leq t_1}\chi_1(n_1)\right)\left(\sum_{n_2\leq t_2}\chi_2(n_2)\right)\left(\frac{s_1s_2}{t_1^{s_1+1}(t_1+t_2)^{s_2+1}}+\frac{s_2(s_2+1)}{t_1^{s_1}(t_1+t_2)^{s_2+2}}\right)dt_2dt_1	\notag
\end{align} 
for $\sigma_2>0$, $\sigma_2+\sigma_1>0$.

Similar expression can be shown for the generalized series 
$\Phi(s_1,\ldots,s_r;a_1,\ldots,a_r)$:

\begin{theorem}
In the region $\mathcal{D}$, the integral expression
	\begin{align*}
		&\Phi(s_1,\ldots,s_r;a_1,\ldots,a_r)\\
		%\sum_{n_1=1}^{\infty}\sum_{n_2=1}^{\infty}\cdots	\sum_{n_r=1}^{\infty}\frac{a_1(n_1)a_2(n_2)\cdots a_r(n_r)}{n_1^{s_1}(n_1+n_2)^{s_2}\cdots (n_1+n_2+\cdots+n_r)^{s_r}}\\
		&=\int_{1}^\infty\int_{1}^\infty\cdots\int_{1}^\infty\prod_{i=1}^r\left(\sum_{n_i\leq t_i}a_i(n_i)\right)\\
		&\times\left(\sum_{\substack{k_1+k_2+\cdots+k_r=r\\k_1+\cdots+k_i\leq i\,,\,1\leq i\leq r\\0\leq k_i\leq i\,,\,1\leq i\leq r}}{1\choose k_1}\prod_{i=2}^r{i-k_1-\cdots-k_{i-1}\choose k_i}\prod_{i=1}^r\frac{(s_i)_{k_i}}{\left(\sum_{j=1}^i t_j\right)^{s_i+k_i}}\right)dt_rdt_{r-1}\cdots dt_1\,
	\end{align*}
holds.
\end{theorem}

%In the next section we start our discussion with this case $r=2$. 

%The main tool to prove Theorem \eqref{main result} is the Abel summation formula. For $ r=1 $, it is nothing but the Dirichlet L-series. We explain the case $ r=2 $ in the following example.

%%%%%%%%%%%%%%%%%%%%%%%%%%%%%%%%%%%%%%%%%%%%%%%%%%%%%%%%%%%%%%%%%%%%%%%%%%%%%%%%%%%%
\section{The case $r=2$}\label{sec2}
%%%%%%%%%%%%%%%%%%%%%%%%%%%%%%%%%%%%%%%%%%%%%%%%%%%%%%%%%%%%%%%%%%%%%%%%%%%%%%%%%%%%

In this section we prove the theorems in the case $r=2$.    The basic structure of
our method can already be seen in this special case.

		For $ \sigma_2>0 $, it follows from \eqref{exp3-2} that
	\begin{align}
		\label{exp4}
		&\sum_{n_1\leq x}\sum_{n_2=1}^{\infty}\frac{\chi_1(n_1)\chi_2(n_2)}{(n_0+n_1)^{s_1}(n_0+n_1+n_2)^{s_2}}\\
		&=\sum_{n_1\leq x}\frac{\chi_1(n_1)}{(n_0+n_1)^{s_1}}\left(\int_{1}^{\infty}\left(\sum_{n_2\leq t_2}\chi_2(n_2)\right)\frac{s_2}{(n_0+n_1+t_2)^{s_2+1}}dt_2\right)\nonumber\\
		&=s_2\int_{1}^{\infty}\left(\sum_{n_2\leq t_2}\chi_2(n_2)\right)\left(\sum_{n_1\leq x}\frac{\chi_1(n_1)}{(n_0+n_1)^{s_1}(n_0+n_1+t_2)^{s_2+1}}\right)dt_2\,.\nonumber
	\end{align}
	By applying the Abel summation formula to the %term $ \displaystyle\left(\sum_{n_1\leq x}\frac{\chi_1(n_1)}{n_1^{s_1}(n_1+t_2)^{s_2+1}}\right) $, 
last inner sum,	we get from \eqref{exp4} that
		\begin{align}\label{exp5}
	&\sum_{n_1\leq x}\sum_{n_2=1}^{\infty}\frac{\chi_1(n_1)\chi_2(n_2)}{(n_0+n_1)^{s_1}(n_0+n_1+n_2)^{s_2}}\\
	&=s_2\int_{1}^{\infty}\left(\sum_{n_2\leq t_2}\chi_2(n_2)\right)\Biggl\{\frac{\sum_{n_1\leq x}\chi_1(n_1)}{(n_0+x)^{s_1}(n_0+x+t_2)^{s_2+1}}
			+\int_{1}^{x}\left(\sum_{n_1\leq t_1}\chi_1(n_1)\right)\notag\\
	&\times\left(\frac{s_1}{(n_0+t_1)^{s_1+1}(n_0+t_1+t_2)^{s_2+1}}+\frac{s_2+1}{(n_0+t_1)^{s_1}(n_0+t_1+t_2)^{s_2+2}}\right)dt_1\Biggr\}dt_2\notag\\
				=&\frac{s_2}{(n_0+x)^{s_1}}\left(\sum_{n_1\leq x}\chi_1(n_1)\right)\int_{1}^{\infty}\left(\sum_{n_2\leq t_2}\chi_2(n_2)\right)\frac{1}{(n_0+x+t_2)^{s_2+1}}dt_2\notag\\
			&\hspace{6mm}+s_2\int_{1}^{\infty}\left(\sum_{n_2\leq t_2}\chi_2(n_2)\right)\left(\int_{1}^{x}\left(\sum_{n_1\leq t_1}\chi_1(n_1)\right)\frac{s_1}{(n_0+t_1)^{s_1+1}(n_0+t_1+t_2)^{s_2+1}}dt_1\right)dt_2\notag\\
			&\hspace{6mm}+s_2\int_{1}^{\infty}\left(\sum_{n_2\leq t_2}\chi_2(n_2)\right)\left(\int_{1}^{x}\left(\sum_{n_1\leq t_1}\chi_1(n_1)\right)\frac{s_2+1}{(n_0+t_1)^{s_1}(n_0+t_1+t_2)^{s_2+2}}dt_1\right)dt_2 \notag\\
			&=J_1+J_2+J_3,\notag
		\end{align}
say.	Now we will discuss the existence of the integrals on the right-hand side of \eqref{exp5}  when $ x $ tends to $ \infty $. 
	Here for $  \sigma_2>0\,,\,\sigma_2+\sigma_1>0 $, we have 
	\begin{align*}
	\lim_{x\to\infty}J_1=0
		%\lim_{x\rightarrow\infty}\frac{s_2}{(n_0+x)^{s_1}}\left(\sum_{n_1\leq x}\chi_1(n_1)\right)\int_{1}^{\infty}\left(\sum_{n_2\leq t_2}\chi_2(n_2)\right)\frac{1}{(n_0+x+t_2)^{s_2+1}}dt_2=0	
	\end{align*} 
as
	\begin{align*}
	|J_1|
%		&\left|\frac{s_2}{(n_0+x)^{s_1}}\left(\sum_{n_1\leq x}\chi_1(n_1)\right)\int_{1}^{\infty}\left(\sum_{n_2\leq t_2}\chi_2(n_2)\right)\frac{1}{(n_0+x+t_2)^{s_2+1}}dt_2\right|\\
		&\leq\frac{\left|s_2\right|q_1q_2}{(n_0+x)^{\sigma_1}}\int_{1}^{\infty}\frac{1}{(n_0+x+t_2)^{\sigma_2+1}}dt\\
		&=\frac{\left|s_2\right|q_1q_2}{\sigma_2(n_0+x)^{\sigma_1}(n_0+x+1)^{\sigma_2}}\leq\frac{\left|s_2\right|q_1q_2}{\sigma_2(n_0+x)^{\sigma_1+\sigma_2}}\,.
	\end{align*}
Next we show that $\lim_{x\to\infty}J_2$ exists
%\begin{align*}
%	s_2\int_{1}^{\infty}\left(\sum_{n_2\leq t_2}\chi_2(n_2)\right)\left(\int_{1}^{x}\left(\sum_{n_1\leq t_1}\chi_1(n_1)\right)\frac{s_1}{t_1^{s_1+1}(t_1+t_2)^{s_2+1}}dt_1\right)dt_2
%\end{align*} 
for  $ \sigma_2>0\,,\,\sigma_2+\sigma_1>0 $ 
as
	\begin{align*}
	&|J_2|\\
%	&\left|s_2\int_{1}^{\infty}\left(\sum_{n_2\leq t_2}\chi_2(n_2)\right)\left(\int_{1}^{x}\left(\sum_{n_1\leq t_1}\chi_1(n_1)\right)\frac{s_1}{(n_0+t_1)^{s_1+1}(n_0+t_1+t_2)^{s_2+1}}dt_1\right)dt_2\right|\\
	&\leq\left|s_1s_2\right|q_1q_2\int_{1}^{\infty}\int_{1}^{x}\frac{1}{(n_0+t_1)^{\sigma_1+1}(n_0+t_1+t_2)^{\sigma_2+1}}dt_1dt_2\\
	&=\left|s_1s_2\right|q_1q_2\int_{1}^{x}\frac{1}{(n_0+t_1)^{\sigma_1+1}}\left(\int_{1}^{\infty}\frac{1}{(n_0+t_1+t_2)^{\sigma_2+1}}dt_2\right)dt_1\\
	&=\frac{\left|s_1s_2\right|q_1q_2}{\sigma_2}\int_{1}^{x}\frac{1}{(n_0+t_1)^{\sigma_1+1}(n_0+t_1+1)^{\sigma_2}}dt_1\\
	&\leq\frac{\left|s_1s_2\right|q_1q_2}{\sigma_2}\int_{1}^{x}\frac{1}{(n_0+t_1)^{\sigma_1+\sigma_2+1}}dt_1=\frac{\left|s_1s_2\right|q_1q_2}{\sigma_2(\sigma_2+\sigma_1)}
	\left(\frac{1}{(n_0+1)^{\sigma_2+\sigma_1}}-\frac{1}{(n_0+x)^{\sigma_2+\sigma_1}}\right).	
\end{align*}
	Simillarly we can check that $\lim_{x\to\infty}J_3$ 
%\begin{align*}
%	s_2\int_{1}^{\infty}\left(\sum_{n_2\leq t_2}\chi_2(n_2)\right)\left(\int_{1}^{x}\left(\sum_{n_1\leq t_1}\chi_1(n_1)\right)\frac{s_2+1}{t_1^{s_1}(t_1+t_2)^{s_2+2}}dt_1\right)dt_2	
%\end{align*} 
exists for  $ \sigma_2>0\,,\,\sigma_2+\sigma_1>0 $.
%	Therefore from \eqref{exp5} , it follows that
%		\begin{align*}
%		&\sum_{n_1=1}^{\infty}\sum_{n_2=1}^{\infty}\frac{\chi_1(n_1)\chi_2(n_2)}{n_1^{s_1}(n_1+n_2)^{s_2}}\\&=s_2\int_{1}^{\infty}\left(\sum_{n_2\leq t_2}\chi_2(n_2)\right)\left(\int_{1}^{\infty}\left(\sum_{n_1\leq t_1}\chi_1(n_1)\right)\frac{s_1}{t_1^{s_1+1}(t_1+t_2)^{s_2+1}}dt_1\right)dt_2\\
%		&\hspace{6mm}+s_2\int_{1}^{\infty}\left(\sum_{n_2\leq t_2}\chi_2(n_2)\right)\left(\int_{1}^{\infty}\left(\sum_{n_1\leq t_1}\chi_1(n_1)\right)\frac{s_2+1}{t_1^{s_1}(t_1+t_2)^{s_2+2}}dt_1\right)dt_2\\
%		&=\int_{1}^{\infty}\int_{1}^{\infty}\left(\sum_{n_1\leq t_1}\chi_1(n_1)\right)\left(\sum_{n_2\leq t_2}\chi_2(n_2)\right)\left(\frac{s_1s_2}{t_1^{s_1+1}(t_1+t_2)^{s_2+1}}+\frac{s_2(s_2+1)}{t_1^{s_1}(t_1+t_2)^{s_2+2}}\right)dt_2dt_1	
%	\end{align*} 
for  $ \sigma_2>0\,,\,\sigma_2+\sigma_1>0 $.

This implies that, when $x\to\infty$, the limit of the left-hand side of \eqref{exp4}
exists, and is equal to 
\begin{align*}
&\lim_{x\to\infty}(J_2+J_3)\\
&=\int_{1}^{\infty}\int_{1}^{\infty}\left(\sum_{n_1\leq t_1}\chi_1(n_1)\right)\left(\sum_{n_2\leq t_2}\chi_2(n_2)\right)\left(\frac{s_1s_2}{(n_0+t_1)^{s_1+1}(n_0+t_1+t_2)^{s_2+1}}\right.\\
&\qquad\left.+\frac{s_2(s_2+1)}{(n_0+t_1)^{s_1}(n_0+t_1+t_2)^{s_2+2}}\right)dt_2dt_1.
\end{align*}
Therefore we now establish our main results Theorem \ref{conv_result} and
Theorem \ref{main result} in the case $r=2$.

%%%%%%%%%%%%%%%%%%%%%%%%%%%%%%%%%%%%%%%%%%%%%%%%%%%%%%%%%%%%%%%%%%%%%%%%%%%%%%%%%%%
\section{A preliminary lemma}\label{sec3}
%%%%%%%%%%%%%%%%%%%%%%%%%%%%%%%%%%%%%%%%%%%%%%%%%%%%%%%%%%%%%%%%%%%%%%%%%%%%%%%%%%%%

In order to prove the theorems in general case, we first prepare an important lemma.
Let $(s)_k=s(s+1)\cdots(s+k-1)$ for any positive integer $k$, and $(s)_0=1$.

\begin{lemma}
	\label{lemma1}
	For integer $ r\geq 3 $, we have
	\begin{align}\label{lem1}
		&-\sum_{\substack{k_2+k_3+\cdots+k_{r}=r-1\\k_2+\cdots+k_i\leq i-1\,,\,2\leq i\leq r\\0\leq k_i\leq i-1\,,\,2\leq i\leq r}}{1\choose k_2}\prod_{i=3}^r{i-1-k_2-\cdots-k_{i-1}\choose k_{i}}\\
&\qquad\times		\frac{\partial}{\partial t_1}\left(\frac{1}{(n_0+t_1)^{s_1}}\prod_{i=2}^r\frac{(s_i)_{k_i}}{\left(n_0+\sum_{j=1}^i t_j\right)^{s_i+k_i}}\right)\notag\\
		&=\sum_{\substack{m_1+m_2+\cdots+m_r=r\\m_1+\cdots+m_i\leq i\,,\,1\leq i\leq r\\0\leq m_i\leq i\,,\,1\leq i\leq r}}{1\choose m_1}\prod_{i=2}^r{i-m_1-\cdots-m_{i-1}\choose m_i}\prod_{i=1}^r\frac{(s_i)_{m_i}}{\left(n_0+\sum_{j=1}^i t_j\right)^{s_i+m_i}}\,.	\notag
	\end{align}
\end{lemma}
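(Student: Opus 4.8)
\textbf{Proof proposal for Lemma \ref{lemma1}.}

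The plan is to compute the left-hand side directly by the product (Leibniz) rule, differentiating the function
$$
F(t_1)=\frac{1}{(n_0+t_1)^{s_1}}\prod_{i=2}^r\frac{(s_i)_{k_i}}{\left(n_0+\sum_{j=1}^i t_j\right)^{s_i+k_i}}
$$
with respect to $t_1$. The crucial observation is that $t_1$ appears in \emph{every} factor, since the $i$-th denominator is $(n_0+t_1+t_2+\cdots+t_i)^{s_i+k_i}$. Hence $\partial_{t_1}F$ is a sum of $r$ terms, the $\ell$-th term being $F$ with the $\ell$-th factor differentiated: differentiating $1/(n_0+t_1)^{s_1}$ produces a factor $-s_1/(n_0+t_1)^{s_1+1}$, i.e. it raises the exponent index $k_1$ from $0$ to $1$ and multiplies by $(s_1)_1$ (up to the overall minus sign); differentiating $(s_i)_{k_i}/(n_0+\sum_{j\le i}t_j)^{s_i+k_i}$ produces $-(s_i)_{k_i}(s_i+k_i)/(n_0+\sum_{j\le i}t_j)^{s_i+k_i+1}$, and since $(s_i)_{k_i}(s_i+k_i)=(s_i)_{k_i+1}$, this is exactly the effect of replacing $k_i$ by $k_i+1$, again up to the sign. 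So, after multiplying through by $-1$ as on the left-hand side, each summand on the left with index vector $(k_2,\dots,k_r)$ (and implicit $k_1=0$) generates, for each $\ell\in\{1,\dots,r\}$, a monomial indexed by $(k_1,\dots,k_r)+e_\ell$, whose total degree is $(r-1)+1=r$.

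The remaining work is purely combinatorial: I must show that when we collect, over all admissible $(k_2,\dots,k_r)$ and all choices of $\ell$, the coefficients of a fixed monomial $\prod_{i=1}^r (s_i)_{m_i}/(n_0+\sum_{j\le i}t_j)^{s_i+m_i}$ with $m_1+\cdots+m_r=r$, the result is precisely the binomial product $\binom{1}{m_1}\prod_{i=2}^r\binom{i-m_1-\cdots-m_{i-1}}{m_i}$ appearing on the right. Writing $k=(k_1,\dots,k_r)=(m_1,\dots,m_r)-e_\ell=m-e_\ell$, I would first note that the constraint structure forces $m_1\in\{0,1\}$ and $m_1+\cdots+m_i\le i$; the contributing $\ell$ are exactly those with $m_\ell\ge 1$ and such that $m-e_\ell$ still satisfies the left-hand constraints ($k_2+\cdots+k_i\le i-1$, $0\le k_i\le i-1$), and one checks $m-e_\ell$ automatically has total degree $r-1$. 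Thus the identity reduces to the coefficient identity
$$
\sum_{\ell\,:\,m-e_\ell \text{ admissible}} c(m-e_\ell)=c(m),\qquad
c(k):=\binom{1}{k_1}\prod_{i=2}^r\binom{i-k_1-\cdots-k_{i-1}}{k_i},
$$
which I expect to prove by induction on $r$, peeling off the last factor $\binom{r-m_1-\cdots-m_{r-1}}{m_r}$ and using the Pascal-type recursion $\binom{N}{m}=\binom{N-1}{m}+\binom{N-1}{m-1}$ to split the sum according to whether $\ell<r$ or $\ell=r$. Careful bookkeeping of which $\ell$ are admissible will be needed so that the boundary cases ($m_r=0$, or $m_1+\cdots+m_{r-1}=r-1$) match; one has to verify that an inadmissible $m-e_\ell$ corresponds exactly to a vanishing binomial coefficient, so that it may be harmlessly included in the sum.

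The main obstacle is the combinatorial identity in the last paragraph, specifically matching the admissibility conditions on the left-hand index vectors with the condition that the corresponding binomial coefficient is nonzero after a Pascal split; the analytic part (Leibniz rule plus the Pochhammer identity $(s)_k(s+k)=(s)_{k+1}$) is routine. A clean way to organize the induction is to fix $m_r$, sum over $\ell\le r-1$ using the inductive identity in the variables $(m_1,\dots,m_{r-1})$ with the top bound shifted by $m_r$, then add the single term from $\ell=r$ (present iff $m_r\ge 1$), and finally invoke Pascal's rule on $\binom{r-m_1-\cdots-m_{r-1}}{m_r}$ versus $\binom{r-1-m_1-\cdots-m_{r-1}}{m_r}$ and $\binom{r-1-m_1-\cdots-m_{r-1}}{m_r-1}$; verifying this matches will complete the proof.
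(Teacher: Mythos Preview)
Your analytic reduction is correct and matches the paper: the Leibniz rule together with $(s)_k(s+k)=(s)_{k+1}$ shows that differentiating in $t_1$ sends the monomial indexed by $(0,k_2,\dots,k_r)$ to the sum over $\ell$ of the monomials indexed by $(0,k_2,\dots,k_r)+e_\ell$. The gap is in the combinatorial step. The coefficient carried by $(k_2,\dots,k_r)$ on the left of the lemma is
\[
\tilde c(k_2,\dots,k_r)=\binom{1}{k_2}\prod_{i=3}^{r}\binom{i-1-k_2-\cdots-k_{i-1}}{k_i},
\]
which is \emph{not} your $c$ evaluated at $(0,k_2,\dots,k_r)$: every upper entry is shifted down by $1$. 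Consequently the identity $\sum_\ell c(m-e_\ell)=c(m)$ that you propose to prove is false. For $r=3$ and $m=(0,0,3)$ the only admissible $\ell$ is $\ell=3$, and your left side is $c(0,0,2)=\binom{1}{0}\binom{2}{0}\binom{3}{2}=3$, while the right side is $c(0,0,3)=\binom{1}{0}\binom{2}{0}\binom{3}{3}=1$. With the correct $\tilde c$ in place of $c$ one gets $\tilde c(0,2)=\binom{1}{0}\binom{2}{2}=1$, which does match.

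After the same Leibniz step, the paper separates the right-hand side of \eqref{lem1} according to $m_1=1$ and $m_1=0$. The $m_1=1$ part coincides termwise with the $\ell=1$ contribution, exactly as you note. For the $m_1=0$ part the paper does \emph{not} induct on $r$; it applies Pascal's rule to the successive factors on the right: writing $\binom{2}{m_2}=\binom{1}{m_2-1}+\binom{1}{m_2}$ peels off precisely the $\ell=2$ contribution (with the correct coefficient $\tilde c$) and leaves a remainder of the same shape with leading factor $\binom{1}{m_2}\binom{3-m_2}{m_3}$; applying Pascal to $\binom{3-m_2}{m_3}$ peels off $\ell=3$; and so on until only the $\ell=r$ term remains. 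Once you replace $c(m-e_\ell)$ by $\tilde c$ on the left, this telescoping argument goes through cleanly. Your induction-on-$r$ plan would need a separate repair even after that correction, since the inductive hypothesis concerns tuples of total degree $r-1$, whereas the truncated tuple $(m_2,\dots,m_{r-1})$ has total degree $r-m_r$.
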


\begin{proof}
For brevity we write 
\begin{align*}
P_i(k_i)=\frac{(s_i)_{k_i}}{\left(n_0+\sum_{j=1}^i t_j\right)^{s_i+k_i}}.
\end{align*}
Since
\begin{align*}
&\frac{\partial}{\partial t_1}\left(\frac{1}{(n_0+t_1)^{s_1}}\prod_{i=2}^r
		P_i(k_i)\right)\\
&\qquad= -\frac{s_1}{(n_0+t_1)^{s_1+1}}\prod_{i=2}^r P_i(k_i)
		% \frac{(s_i)_{k_i}}{\left(t_1+\sum_{j=2}^i t_j\right)^{s_i+k_i}}
		-\frac{1}{(n_0+t_1)^{s_1}}\sum_{h=2}^r P_h(k_h+1) %\frac{(s_h)_{k_h+1}}{(t_1+t_2+\cdots+t_h)^{s_h+k_h+1}}
		\prod_{\substack{i=2\\i\neq h}}^r P_i(k_i)
\end{align*}
%By performing the partial derivative of $ \displaystyle\frac{1}{t_1^{s_1}}\prod_{i=2^r\frac{(s_i)_{k_i}}{\left(t_1+\sum_{j=2}^i t_j\right)^{s_i+k_i}} $ 
%with respect to $ t_1 $, 
we find that the left-hand side of \eqref{lem1} is
	\begin{align}\label{LHS}
		&-\sum_{\substack{k_2+k_3+\cdots+k_{r}=r-1\\k_2+\cdots+k_i\leq i-1\,,\,2\leq i\leq r\\0\leq k_i\leq i-1\,,\,2\leq i\leq r}}{1\choose k_2}\prod_{i=3}^r{i-1-k_2-\cdots-k_{i-1}\choose k_{i}}
		\frac{\partial}{\partial t_1}\left(\frac{1}{(n_0+t_1)^{s_1}}\prod_{i=2}^r
		P_i(k_i)%\frac{(s_i)_{k_i}}{\left(t_1+\sum_{j=2}^i t_j\right)^{s_i+k_i}} 
		\right)
		\\
%		&=\sum_{\substack{k_2+k_3+\cdots+k_{r}=r-1\\k_2+\cdots+k_i\leq i-1\,,\,2\leq i\leq r\\0\leq k_i\leq i-1\,,\,2\leq i\leq r}}{1\choose k_2}\prod_{i=3}^r{i-1-k_2-\cdots-k_{i-1}\choose k_{i}}\\
%		&\times\left(\frac{s_1}{t_1^{s_1+1}}\prod_{i=2}^r P_i(k_i)
		% \frac{(s_i)_{k_i}}{\left(t_1+\sum_{j=2}^i t_j\right)^{s_i+k_i}}
%		+\frac{1}{t_1^{s_1}}\sum_{h=2}^r P_h(k_h+1) %\frac{(s_h)_{k_h+1}}{(t_1+t_2+\cdots+t_h)^{s_h+k_h+1}}
%		\prod_{\substack{i=2\\i\neq h}}^r P_i(k_i) %\frac{(s_i)_{k_i}}{\left(t_1+\sum_{j=2}^i t_j\right)^{s_i+k_i}}
%	\right)\\
%			&=\sum_{\substack{m_1+m_2+\cdots+m_r=r\\m_1+\cdots+m_i\leq i\,,\,1\leq i\leq r\\0\leq m_i\leq i\,,\,1\leq i\leq r}}{1\choose m_1}\prod_{i=2}^r{i-m_1-\cdots-m_{i-1}\choose m_i}\prod_{i=1}^r\frac{(s_i)_{m_i}}{\left(\sum_{j=1}^i t_j\right)^{s_i+m_i}}\,.
&=\frac{s_1}{(n_0+t_1)^{s_1+1}}S+\frac{1}{(n_0+t_1)^{s_1}}\sum_{h=2}^r S_h,\notag
		\end{align}
where
$$
S=\sum_{\substack{k_2+k_3+\cdots+k_{r}=r-1\\k_2+\cdots+k_i\leq i-1\,,\,2\leq i\leq r\\0\leq k_i\leq i-1\,,\,2\leq i\leq r}}{1\choose k_2}\prod_{i=3}^r{i-1-k_2-\cdots-k_{i-1}\choose k_{i}}
\prod_{i=2}^r P_i(k_i)
$$
and
$$
S_h=\sum_{\substack{k_2+k_3+\cdots+k_{r}=r-1\\k_2+\cdots+k_i\leq i-1\,,\,2\leq i\leq r\\0\leq k_i\leq i-1\,,\,2\leq i\leq r}}{1\choose k_2}\prod_{i=3}^r{i-1-k_2-\cdots-k_{i-1}\choose k_{i}}
P_h(k_h+1)\prod_{\substack{i=2\\i\neq h}}^r P_i(k_i)
$$	
for $2\leq h\leq r$.	

On the other hand, on the right-hand side of \eqref{lem1}, the only possible values of
$m_1$ are $ m_1=0,1 $. 
Denote the parts 
 of the right-hand side corresponding to $m_1=0$ and $m_1=1$ by $A_0$ and $A_1$, respectively.
It is easy to see that the part $A_1$ is
	\begin{align*}
		A_1&=\sum_{\substack{m_2+\cdots+m_r=r-1\\m_2+\cdots+m_i\leq i-1\,,\,2\leq i\leq r\\0\leq m_i\leq i-1\,,\,2\leq i\leq r}}{1\choose m_2}\prod_{i=3}^r{i-1-m_2-\cdots-m_{i-1}\choose m_i}\frac{s_1}{(n_0+t_1)^{s_1+1}}\prod_{i=2}^r P_i(m_i)\\
		& = \frac{s_1}{(n_0+t_1)^{s_1+1}}S.
%		\frac{(s_i)_{m_i}}{\left(\sum_{j=1}^i t_j\right)^{s_i+m_i}}\\
%		&=\frac{s_1}{t_1^{s_1+1}}\sum_{\substack{k_2+k_3+\cdots+k_{r}=r-1\\k_2+\cdots+k_i\leq i-1\,,\,2\leq i\leq r\\0\leq k_i\leq i-1\,,\,2\leq i\leq r}}{1\choose k_2}\prod_{i=3}^r{i-1-k_2-\cdots-k_{i-1}\choose k_{i}}
%		\prod_{i=2}^r\frac{(s_i)_{k_i}}{\left(t_1+\sum_{j=2}^i t_j\right)^{s_i+k_i}}	
	\end{align*}
From this and \eqref{LHS} we see that our remaining task is to show
\begin{align}\label{lem-1-aim}
A_0=\frac{1}{(n_0+t_1)^{s_1}}\sum_{h=2}^r S_h.
\end{align}
Applying the simple combinatorial identity
$\displaystyle{\binom{2}{m_2}=\binom{1}{m_2-1}+\binom{1}{m_2}}$
(where, and in what follows, we understand $\binom{n}{k}=0$ if $k<0$ or if $n<k$),
we have
\begin{align*}
A_0&=\sum_{\substack{m_2+\cdots+m_r=r\\m_2+\cdots+m_i\leq i\,,\,2\leq i\leq r\\0\leq m_i\leq i\,,\,2\leq i\leq r}}{2\choose m_2}\prod_{i=3}^r{i-m_2-\cdots-m_{i-1}\choose m_i}\frac{1}{(n_0+t_1)^{s_1}}\prod_{i=2}^r P_i(m_i)\\
&=\frac{1}{(n_0+t_1)^{s_1}}(B_2+C_2),
\end{align*}
where
$$
B_2=\sum_{\substack{m_2+\cdots+m_r=r\\m_2+\cdots+m_i\leq i\,,\,2\leq i\leq r\\
1\leq m_2\leq 2\\0\leq m_i\leq i\,,\,3\leq i\leq r}}{1\choose m_2-1}\prod_{i=3}^r{i-m_2-\cdots-m_{i-1}\choose m_i}\prod_{i=2}^r P_i(m_i)
$$
and
$$
C_2=\sum_{\substack{m_2+\cdots+m_r=r\\m_2+\cdots+m_i\leq i\,,\,3\leq i\leq r\\
0\leq m_2\leq 1\\0\leq m_i\leq i\,,\,3\leq i\leq r}}{1\choose m_2}{3-m_2\choose m_3}\prod_{i=4}^r{i-m_2-\cdots-m_{i-1}\choose m_i}\prod_{i=2}^r P_i(m_i).
$$
Putting $k_2=m_2-1$ and $k_i=m_i$ ($3\leq i\leq r$), we find that $B_2=S_2$, hence
$A_0=(n_0+t_1)^{-s_1}(S_2+C_2)$.
Next, according to
$\displaystyle{{3-m_2\choose m_3}={2-m_2\choose m_3-1}+{2-m_2\choose m_3}}$, 
we divide $C_2$ as above into two parts: $C_2=B_3+C_3$, where
$$
B_3=\sum_{\substack{m_2+\cdots+m_r=r\\m_2+\cdots+m_i\leq i\,,\,2\leq i\leq r\\
0\leq m_2\leq 1, 1\leq m_3\leq 3\\0\leq m_i\leq i\,,\,4\leq i\leq r}}{1\choose m_2}{2-m_2\choose m_3-1}
\prod_{i=4}^r{i-m_2-\cdots-m_{i-1}\choose m_i}\prod_{i=2}^r P_i(m_i)
$$
and
\begin{align*}
C_3&=\sum_{\substack{m_2+\cdots+m_r=r\\m_2+m_3\leq 2\\m_2+\cdots+m_i\leq i\,,\,4\leq i\leq r
\\0\leq m_2\leq 1,0\leq m_3\leq 2\\0\leq m_i\leq i\,,\,4\leq i\leq r}}{1\choose m_2}{2-m_2\choose m_3}{4-m_2-m_3\choose m_4}\\
&\times\prod_{i=5}^r{i-m_2-\cdots-m_{i-1}\choose m_i}\prod_{i=2}^r P_i(m_i).
\end{align*}

Then, putting $k_3=m_3-1$ and $k_i=m_i$ ($i\neq 3$), we find that $B_3=S_3$, so
$A_0=(n_0+t_1)^{-s_1}(S_2+S_3+C_3)$.
Next using
$$
{4-m_2-m_3\choose m_4}={3-m_2-m_3\choose m_4-1}+{3-m_2-m_3\choose m_4}
$$
we divide $C_3$ in a similar way, and so on.
Repeating this procedure, we arrive at
$A_0=(n_0+t_1)^{-s_1}(S_2+S_3+\cdots+S_{r-1}+C_{r-1})$, where
\begin{align*}
C_{r-1}&=\sum_{\substack{m_2+\cdots+m_r=r\\m_2+\cdots+m_i\leq i-1\,,\,2\leq i\leq r-1\\0\leq m_i\leq i-1\,,\,2\leq i\leq r-1\\0\leq m_r\leq r}}{1\choose m_2}{2-m_2\choose m_3}\cdots
{(r-2)-m_2-\cdots-m_{r-2}\choose m_{r-1}}\\
&\times{r-m_2-\cdots-m_{r-1}\choose m_r}\prod_{i=2}^r P_i(m_i).
\end{align*}
On $C_{r-1}$, the conditions $m_2+\cdots+m_r=r$ and $m_2+\cdots+m_{r-1}\leq r-2$
yield $m_r\geq 2$.   Therefore we may put $k_r=m_r-1$ and $k_i=m_i$ ($i\neq r$).
Since
$$
{r-m_2-\cdots-m_{r-1}\choose m_r}=1={r-1-k_2-\cdots-k_{r-1}\choose k_r},
$$
we have $C_{r-1}=S_r$, and hence \eqref{lem-1-aim}.
The proof of the lemma is complete.
	
%	\begin{align*}
%		&\sum_{\substack{m_1+m_2+\cdots+m_r=r\\m_1+\cdots+m_i\leq i\,,\,1\leq i\leq r\\0\leq m_i\leq i\,,\,1\leq i\leq r}}{1\choose m_1}\prod_{i=2}^r{i-m_1-\cdots-m_{i-1}\choose m_i}\prod_{i=1}^r P_i(m_i)\\
%		&=\frac{1}{t_1^{s_1}}\sum_{\substack{k_2+k_3+\cdots+k_{r}=r-1\\k_2+\cdots+k_i\leq i-1\,,\,2\leq i\leq r\\0\leq k_i\leq i-1\,,\,2\leq i\leq r}}{1\choose k_2}\prod_{i=3}^r{i-1-k_2-\cdots-k_{i-1}\choose k_{i}}\\
%		&\hspace{40mm}\left(\sum_{h=2}^r\frac{(s_h)_{k_h+1}}{(t_1+t_2+\cdots+t_h)^{s_h+k_h+1}}\prod_{\substack{i=2\\i\neq h}}^r\frac{(s_i)_{k_i}}{\left(t_1+\sum_{j=2}^i t_j\right)^{s_i+k_i}}\right).
%	\end{align*}
	\end{proof}
%%%%%%%%%%%%%%%%%%%%%%%%%%%%%%%%%%%%%%%%%%%%%%%%%%%%%%%%%%%%%%%%%%%%%%%%%%%%%%%%%%
\section{Proof of Theorems \ref{conv_result} and \ref{main result}}
%%%%%%%%%%%%%%%%%%%%%%%%%%%%%%%%%%%%%%%%%%%%%%%%%%%%%%%%%%%%%%%%%%%%%%%%%%%%%%%%%%
\begin{proof}
		We prove the theorems by induction on $ r $.
	\vspace{2mm}\\
	We already checked that the theorems are true for $ r=1,2 $ cases.
	\vspace{2mm}\\
	Assume the theorems are true for $ r-1 $, $ r\geq3 $. Now we will prove for $ r $.\\
	For $ \sigma_r+\sigma_{r-1}+\cdots+\sigma_{r-i}>0\,,\,0\leq i\leq r-2 $, it follows from the induction hypothesis that
	\begin{align*}
		&\sum_{n_1\leq x}\sum_{n_2=1}^{\infty}\cdots	\sum_{n_r=1}^{\infty}\frac{\chi_1(n_1)\chi_2(n_2)\cdots\chi_r(n_r)}{(n_0+n_1)^{s_1}(n_0+n_1+n_2)^{s_2}\cdots (n_0+n_1+n_2+\cdots+n_r)^{s_r}}\\
		&=\sum_{n_1\leq x}\frac{\chi_1(n_1)}{(n_0+n_1)^{s_1}}\int_{1}^\infty\int_{1}^\infty\cdots\int_{1}^\infty\prod_{i=2}^r\left(\sum_{n_i\leq t_i}\chi_i(n_i)\right)\\
		&\qquad\times\left(\sum_{\substack{k_2+\cdots+k_r=r-1\\k_2+\cdots+k_i\leq i-1\,,\,2\leq i\leq r\\0\leq k_i\leq i-1\,,\,2\leq i\leq r}}{1\choose k_2}\prod_{i=3}^{r}{i-1-k_2-\cdots-k_{i-1}\choose k_i}\right.\\
		&\qquad\qquad\left.\times\prod_{i=2}^r\frac{(s_i)_{k_i}}{\left(n_0+n_1+\sum_{j=2}^i t_j\right)^{s_i+k_i}}\right)dt_r\cdots dt_2\\
		&=\int_{1}^\infty\int_{1}^\infty\cdots\int_{1}^\infty\prod_{i=2}^r\left(\sum_{n_i\leq t_i}\chi_i(n_i)\right)\\
		&\times\Biggl(\sum_{\substack{k_2+\cdots+k_r=r-1\\k_2+\cdots+k_i\leq i-1\,,\,2\leq i\leq r\\0\leq k_i\leq i-1\,,\,2\leq i\leq r}}{1\choose k_2}\prod_{i=3}^{r}{i-1-k_2-\cdots-k_{i-1}\choose k_i}\Biggr.\\
		&\Biggl.\times\sum_{n_1\leq x}\frac{\chi_1(n_1)}{(n_0+n_1)^{s_1}}\prod_{i=2}^r\frac{(s_i)_{k_i}}{\left(n_0+n_1+\sum_{j=2}^i t_j\right)^{s_i+k_i}}\Biggr)dt_rdt_{r-1}\cdots dt_2\,.	
	\end{align*}
Now applying the Abel summation formula to the term 
$$\sum_{n_1\leq x}\frac{\chi_1(n_1)}{(n_0+n_1)^{s_1}}\prod_{i=2}^r\frac{(s_i)_{k_i}}{\left(n_0+n_1+\sum_{j=2}^i t_j\right)^{s_i+k_i}} 
$$ 
and then using Lemma \ref{lemma1} , we get that
\begin{align}
	\label{exp6}
	&\sum_{n_1\leq x}\sum_{n_2=1}^{\infty}\cdots	\sum_{n_r=1}^{\infty}\frac{\chi_1(n_1)\chi_2(n_2)\cdots\chi_r(n_r)}{n_1^{s_1}(n_1+n_2)^{s_2}\cdots (n_1+n_2+\cdots+n_r)^{s_r}}\\\nonumber
	&=\sum_{\substack{k_2+\cdots+k_r=r-1\\k_2+\cdots+k_i\leq i-1\,,\,2\leq i\leq r\\0\leq k_i\leq i-1\,,\,2\leq i\leq r}}{1\choose k_2}\prod_{i=3}^{r}{i-1-k_2-\cdots-k_{i-1}\choose k_i}\\\nonumber
	&\qquad\times\left(\sum_{n_1\leq x}\chi_1(n_1)\right)\frac{1}{(n_0+x)^{s_1}}\int_{1}^\infty\int_{1}^\infty\cdots\int_{1}^\infty\prod_{i=2}^r\left(\sum_{n_i\leq t_i}\chi_i(n_i)\right)\\\nonumber
	&\qquad\times\prod_{i=2}^r\frac{(s_i)_{k_i}}{\left(n_0+x+\sum_{j=2}^i t_j\right)^{s_i+k_i}}dt_r\cdots dt_2\\\nonumber
	&+\sum_{\substack{m_1+\cdots+m_r=r\\m_1+\cdots+m_i\leq i\,,\,1\leq i\leq r\\0\leq m_i\leq i\,,\,1\leq i\leq r}}{1\choose m_1}\prod_{i=2}^{r}{i-m_1-\cdots-m_{i-1}\choose m_i}\\\nonumber
	&\qquad\times\int_{1}^\infty\int_{1}^\infty\cdots\int_{1}^\infty\prod_{i=2}^r\left(\sum_{n_i\leq t_i}\chi_i(n_i)\right)\\\nonumber
	&\qquad\times\left(\int_1^x\sum_{n_1\leq t_1}\chi_1(n_1)\prod_{i=1}^r\frac{(s_i)_{m_i}}{\left(n_0+\sum_{j=1}^i t_j\right)^{s_i+m_i}}dt_1\right)dt_r\cdots dt_2\\\nonumber	
	&=\sum_{\substack{k_2+\cdots+k_r=r-1\\k_2+\cdots+k_i\leq i-1\,,\,2\leq i\leq r\\0\leq k_i\leq i-1\,,\,2\leq i\leq r}}{1\choose k_2}
	\prod_{i=3}^{r}{i-1-k_2-\cdots-k_{i-1}\choose k_i}U_{k_2,\ldots,k_r}(x)\\\nonumber
	&+\sum_{\substack{m_1+\cdots+m_r=r\\m_1+\cdots+m_i\leq i\,,\,1\leq i\leq r\\0\leq m_i\leq i\,,\,1\leq i\leq r}}{1\choose m_1}\prod_{i=2}^{r}{i-m_1-\cdots-m_{i-1}\choose m_i}
	V_{m_1,\ldots,m_r}(x),
\end{align}
say.%\vspace{2mm}\\

Next we will discuss that the limit of the above, when $ x $ tends to $ \infty $, exists.

For integers $ k_2,k_3,\ldots,k_r $ satisfying $ k_2+\cdots+k_r=r-1\,,\,k_2+\cdots+k_i\leq i-1\,,\,\\0\leq k_i\leq i-1 $, $ 2\leq i\leq r $ ensures that $ k_r+k_{r-1}\cdots+k_{r-i}\geq i+1\,,\,0\leq i\leq r-2 $ .\\
Then for $ \sigma_r+\sigma_{r-1}+\cdots+\sigma_{r-i}>0\,,\,0\leq i\leq r-1 $, we have
$$
\sigma_r+\cdots+\sigma_{r-i}+k_r+\cdots+k_{r-i}>i+1 \quad (0\leq i\leq r-2).
$$
Therefore we have

\begin{align*}
	%&\left|\left(\sum_{n_1\leq x}\chi_1(n_1)\right)\frac{1}{x^{s_1}}\int_{1}^\infty\int_{1}^\infty\cdots\int_{1}^\infty\prod_{i=2}^r\left(\sum_{n_i\leq t_i}\chi_i(n_i)\right)\prod_{i=2}^r\frac{(s_i)_{k_i}}{\left(x+\sum_{j=2}^i t_j\right)^{s_i+k_i}}dt_r\cdots dt_2\right|\\
	&|U_{k_2,\ldots,k_r}(x)|\\
	&\leq\frac{\prod_{i=1}^rq_i}{(n_0+x)^{\sigma_1}}\int_{1}^\infty\cdots\int_{1}^\infty\prod_{i=2}^{r-1}\frac{\left|(s_i)_{k_i}\right|}{\left(n_0+x+\sum_{j=2}^it_j\right)^{\sigma_i+k_i}}\\
	&\qquad\times\left(\int_1^\infty\frac{\left|(s_r)_{k_r}\right|}{\left(n_0+x+\sum_{j=2}^rt_j\right)^{\sigma_r+k_r}}dt_r\right)dt_{r-1}\cdots dt_{2}\\
	&=\frac{\prod_{i=1}^rq_i}{(\sigma_r+k_r-1)(n_0+x)^{\sigma_1}}\int_{1}^\infty\cdots\int_{1}^\infty\prod_{i=2}^{r-1}\frac{\left|(s_i)_{k_i}\right|}{\left(n_0+x+\sum_{j=2}^it_j\right)^{\sigma_i+k_i}}\\
	&\qquad\times\frac{\left|(s_r)_{k_r}\right|}{\left(n_0+x+1+\sum_{j=2}^{r-1}t_j\right)^{\sigma_r+k_r-1}}dt_{r-1}\cdots dt_{2}\\
	&\leq \frac{\prod_{i=1}^r q_i\left|(s_r)_{k_r}\right|}{\sigma_r(n_0+x)^{\sigma_1}}\int_{1}^\infty\cdots\int_{1}^\infty\prod_{i=2}^{r-2}\frac{\left|(s_i)_{k_i}\right|}{\left(n_0+x+\sum_{j=2}^it_j\right)^{\sigma_i+k_i}}\\
	&\qquad\times\left(\int_1^\infty\frac{\left|(s_{r-1})_{k_{r-1}}\right|}{\left(n_0+x+\sum_{j=2}^{r-1}t_j\right)^{\sigma_r+\sigma_{r-1}+k_r+k_{r-1}-1}}dt_{r-1}\right)dt_{r-2}\cdots dt_{2}\,.	
\end{align*}
Continuing in the same manner, we arrive at
\begin{align*}
|U_{k_2,\ldots,k_r}(x)|
	%&\left|\left(\sum_{n_1\leq x}\chi_1(n_1)\right)\frac{1}{(n_0+x)^{s_1}}\int_{1}^\infty\int_{1}^\infty\cdots\int_{1}^\infty\prod_{i=2}^r\left(\sum_{n_i\leq t_i}\chi_i(n_i)\right)\right.\\
	%&\qquad\times\left.\prod_{i=2}^r\frac{(s_i)_{k_i}}{\left(n_0+x+\sum_{j=2}^i t_j\right)^{s_i+k_i}}dt_r\cdots dt_2\right|\\
	\leq\left(\prod_{i=1}^r q_i\right)\left(\prod_{i=2}^r\frac{\left|(s_i)_{k_i}\right|}{\sigma_r+\sigma_{r-1}+\cdots+\sigma_i}\right)\frac{1}{(n_0+x)^{\sigma_r+\sigma_{r-1}+\cdots+\sigma_1}}\,.	
\end{align*}
This implies that the limit of $U_{k_2,\ldots,k_r}(x)$ when $ x $ tends to $ \infty $ 
exists and
%\begin{align}
%	\label{exp7}
%	\left(\sum_{n_1\leq x}\chi_1(n_1)\right)\frac{1}{x^{s_1}}\int_{1}^\infty\int_{1}^\infty\cdots\int_{1}^\infty\prod_{i=2}^r\left(\sum_{n_i\leq t_i}\chi_i(n_i)\right)\prod_{i=2}^r\frac{(s_i)_{k_i}}{\left(x+\sum_{j=2}^i t_j\right)^{s_i+k_i}}dt_r\cdots dt_2
%\end{align}
\begin{align}
\label{exp7}
\lim_{x\to\infty}U_{k_2,\ldots,k_r}(x)=0
\end{align}
for $\sigma_r+\sigma_{r-1}+\cdots+\sigma_{r-i}>0\,,\,0\leq i\leq r-1.$

For integers $ m_1,m_2,\ldots,m_r $ satisfying $ m_1+m_2+\cdots+m_r=r\,,\,m_1+\cdots+m_i\leq i\,,\,\\0\leq m_i\leq i $, $ 1\leq i\leq r $ ensures that $ m_r+m_{r-1}\cdots+m_{r-i}\geq i+1\,,\,0\leq i\leq r-1 $.\\
Then for $ \sigma_r+\sigma_{r-1}+\cdots+\sigma_{r-i}>0\,,\,0\leq i\leq r-1 $, we have
\begin{align*}
&|V_{m_1,\ldots,m_r}(x)|\\
%	&\left|\int_{1}^\infty\int_{1}^\infty\cdots\int_{1}^\infty\prod_{i=2}^r\left(\sum_{n_i\leq t_i}\chi_i(n_i)\right)\left(\int_1^x\sum_{n_1\leq t_1}\chi_1(n_1)\prod_{i=1}^r\frac{(s_i)_{m_i}}{\left(\sum_{j=1}^i t_j\right)^{s_i+m_i}}dt_1\right)dt_r\cdots dt_2\right|\\
	&\leq\prod_{i=1}^r q_i\int_{1}^\infty\int_{1}^\infty\cdots\int_{1}^\infty\int_1^x\prod_{i=1}^r\frac{\left|(s_i)_{m_i}\right|}{\left(n_0+\sum_{j=1}^i t_j\right)^{\sigma_i+m_i}}dt_1dt_r\cdots dt_2\\
	&=\prod_{i=1}^r q_i\int_{1}^x\int_{1}^\infty\cdots\int_{1}^\infty\prod_{i=1}^{r-1}\frac{\left|(s_i)_{m_i}\right|}{\left(n_0+\sum_{j=1}^i t_j\right)^{\sigma_i+m_i}}\\
	&\qquad\times\left(\int_1^\infty\frac{\left|(s_r)_{m_r}\right|}{\left(n_0+\sum_{j=1}^r t_j\right)^{\sigma_r+m_r}} dt_r\right)dt_{r-1}\cdots dt_{1}\\
	&\leq \frac{\prod_{i=1}^r q_i\left|(s_r)_{m_r}\right|}{\sigma_r}\int_{1}^x\int_{1}^\infty\cdots\int_{1}^\infty\prod_{i=1}^{r-1}\frac{\left|(s_i)_{m_i}\right|}{\left(n_0+\sum_{j=1}^i t_j\right)^{\sigma_i+m_i}}\\
	&\qquad\times\frac{1}{\left(n_0+\sum_{j=1}^{r-1} t_j+1\right)^{\sigma_r+m_r-1}}dt_{r-1}\cdots dt_{1}\\
	&\leq \frac{\prod_{i=1}^r q_i\left|(s_r)_{m_r}\right|}{\sigma_r}\int_{1}^x\int_{1}^\infty\cdots\int_{1}^\infty\prod_{i=1}^{r-2}\frac{\left|(s_i)_{m_i}\right|}{\left(n_0+\sum_{j=1}^i t_j\right)^{\sigma_i+m_i}}\\
	&\qquad\times\left(\int_1^\infty\frac{\left|(s_{r-1})_{m_{r-1}}\right|}{\left(n_0+\sum_{j=1}^{r-1} t_j\right)^{\sigma_r+\sigma_{r-1}+m_r+m_{r-1}-1}} dt_{r-1}\right)dt_{r-2}\cdots dt_{1}\,.
\end{align*}
Continuing in the same manner, we arrive at
\begin{align*}
&|V_{m_1,\ldots,m_r}(x)|\\
%	&\left|\int_{1}^\infty\int_{1}^\infty\cdots\int_{1}^\infty\prod_{i=2}^r\left(\sum_{n_i\leq t_i}\chi_i(n_i)\right)\left(\int_1^x\sum_{n_1\leq t_1}\chi_1(n_1)\prod_{i=1}^r\frac{(s_i)_{m_i}}{\left(\sum_{j=1}^i t_j\right)^{s_i+m_i}}dt_1\right)dt_r\cdots dt_2\right|	\\
	&\leq\left(\prod_{i=1}^r q_i\right)\left(\prod_{i=1}^r\frac{(\left|s_i\right|)_{m_i}}{\sigma_r+\sigma_{r-1}+\cdots+\sigma_i}\right)\left(\frac{1}{(n_0+1)^{\sigma_r+\sigma_{r-1}+\cdots+\sigma_1}}-\frac{1}{(n_0+x)^{\sigma_r+\sigma_{r-1}+\cdots+\sigma_1}}\right)\,. 
\end{align*}
This implies that the limit
\begin{equation}
	\label{exp8}
	\lim_{x\to\infty}V_{m_1,\ldots,m_r}(x)
%	\int_{1}^\infty\int_{1}^\infty\cdots\int_{1}^\infty\prod_{i=2}^r\left(\sum_{n_i\leq t_i}\chi_i(n_i)\right)\left(\int_1^x\sum_{n_1\leq t_1}\chi_1(n_1)\prod_{i=1}^r\frac{(s_i)_{m_i}}{\left(\sum_{j=1}^i t_j\right)^{s_i+m_i}}dt_1\right)dt_r\cdots dt_2	
\end{equation}
exists for $\sigma_r+\sigma_{r-1}+\cdots+\sigma_{r-i}>0\,,\,0\leq i\leq r-1\,.$
 \vspace{2mm}\\
Therefore by letting $ x $ tend to $ \infty $ in \eqref{exp6} and noting \eqref{exp7} and \eqref{exp8}, it follows that
\begin{align*}
	&\sum_{n_1=1}^\infty\sum_{n_2=1}^{\infty}\cdots	\sum_{n_r=1}^{\infty}\frac{\chi_1(n_1)\chi_2(n_2)\cdots\chi_r(n_r)}{n_1^{s_1}(n_1+n_2)^{s_2}\cdots (n_1+n_2+\cdots+n_r)^{s_r}}\\
	&=\sum_{\substack{m_1+\cdots+m_r=r\\m_1+\cdots+m_i\leq i\,,\,1\leq i\leq r\\0\leq m_i\leq i\,,\,1\leq i\leq r}}{1\choose m_1}\prod_{i=2}^{r}{i-m_1-\cdots-m_{i-1}\choose m_i}\\
	&\int_{1}^\infty\int_{1}^\infty\cdots\int_{1}^\infty\prod_{i=2}^r\left(\sum_{n_i\leq t_i}\chi_i(n_i)\right)\left(\int_1^\infty\sum_{n_1\leq t_1}\chi_1(n_1)\prod_{i=1}^r\frac{(s_i)_{m_i}}{\left(\sum_{j=1}^i t_j\right)^{s_i+m_i}}dt_1\right)dt_r\cdots dt_2\\
	&=\int_{1}^\infty\int_{1}^\infty\cdots\int_{1}^\infty\prod_{i=1}^r\left(\sum_{n_i\leq t_i}\chi_i(n_i)\right)\\
	&\left(\sum_{\substack{m_1+\cdots+m_r=r\\m_1+\cdots+m_i\leq i\,,\,1\leq i\leq r\\0\leq m_i\leq i\,,\,1\leq i\leq r}}{1\choose m_1}\prod_{i=2}^{r}{i-m_1-\cdots-m_{i-1}\choose m_i}\prod_{i=1}^r\frac{(s_i)_{m_i}}{\left(\sum_{j=1}^it_j\right)^{s_i+m_i}}\right)dt_rdt_{r-1}\cdots dt_1
\end{align*} for $ \sigma_r+\sigma_{r-1}+\cdots+\sigma_{r-i}>0\,,\,0\leq i\leq r-1\,. $\\
This completes the proof of the theorems.
\end{proof}

	\end{document}